\newtheorem*{theorem*}{Theorem}
\newtheorem{theorem}{Theorem}[section]
\newtheorem{corollary}[theorem]{Corollary}
\newtheorem{claim}[theorem]{Claim}
\newtheorem{lemma}[theorem]{Lemma}
\newtheorem{conjecture}[theorem]{Conjecture}
\theoremstyle{definition}
\newtheorem{counterexample}[theorem]{Counterexample}
\newtheorem{question}[theorem]{Question}
\DeclareMathOperator{\rev}{\mathrm{rev}}
\newcommand{\Av}{\mathrm{Av}}
\newcommand{\usigma}{\underline{\sigma}}
\newcommand{\urevsigma}{\underline{\rev(\sigma)}}
\title{Periodic Points of Consecutive-Pattern-Avoiding Stack-Sorting Maps}
\author{Ilaria Seidel} \address{Department of Mathematics, Harvard University, Cambridge, MA 02138} \email{iseidel@college.harvard.edu}
\author{Nathan Sun} \address{Department of Applied Mathematics, Harvard University, Cambridge, MA 02138} \email{nsun@college.harvard.edu}
\date{August 10, 2023}
\subjclass[2020]{05A05, 37E15}
\keywords{Permutations, pattern avoidance, consecutive patterns, consecutive pattern avoidance, stack sorting, periodic points, orbits.}
\begin{document}
\maketitle

\begin{abstract}
West's stack-sorting map involves a stack which avoids the permutation $21$ consecutively. Defant and Zheng extended this to a consecutive-pattern-avoiding stack-sorting map $SC_\sigma$, where the stack must always avoid a given permutation $\sigma$ consecutively. We address one of the main conjectures raised by Defant and Zheng in their dynamical approach to $SC_\sigma$. Specifically, we show that the periodic points of $SC_\sigma$ are precisely the permutations that consecutively avoid $\sigma$ and its reverse.
\end{abstract}

\section{Introduction}

In 1968, Knuth defined a stack-sorting ``machine" which operates on permutations \cite{knuth}. Knuth's machine stores a separate ``stack." At any point, a user can ``push" the next element of the permutation onto the stack or ``pop" the top element off the stack and into the output permutation. Knuth showed that a permutation can be sorted to the identity using the machine if and only if it avoids the pattern $231$. (We refer the reader to Section~\ref{sec:preliminaries} for formal definitions about pattern avoidance and stack-sorting.) In 1990, West defined a deterministic method of applying Knuth's ``machine": a map $s$ on permutations of length $n$ \cite{37}. To do so, West first imposed an additional condition on the stack: one cannot ``push" a larger element on top of a smaller element. West's map $s$ uses the machine to apply the operation ``push" whenever possible; otherwise, it uses the machine to apply the ``pop" operation. (See Figure~\ref{fig:machine} for an illustration and Figure~\ref{fig:west} for an example of $s$.) Notably, West's map $s$ also sorts every $231$-avoiding permutation to the identity. Recently, several mathematicians have explored generalizations of Knuth and West's ideas. Albert, Homberger, Pantone, Shar, and Vatter \cite{albert2018generating} defined $\mathcal{C}$-machines for a permutation class $\mathcal{C}$, in which the stack must always be in the same relative order as a member of $\mathcal{C}$. Cerbai, Claesson, and Ferrari \cite{cerbai2020stack} looked at $\mathcal{C}$-machines in the special case that $\mathcal{C} = \Av(\sigma)$, where $\Av(\sigma)$ is the set of permutations which avoid the pattern $\sigma$. They defined a generalization of $s$ with $\sigma$-avoiding stacks called $s_\sigma$. In this context, West's map is equivalent to $s_{21}$.

\begin{figure}
    \centering
    \includegraphics[width=250pt]{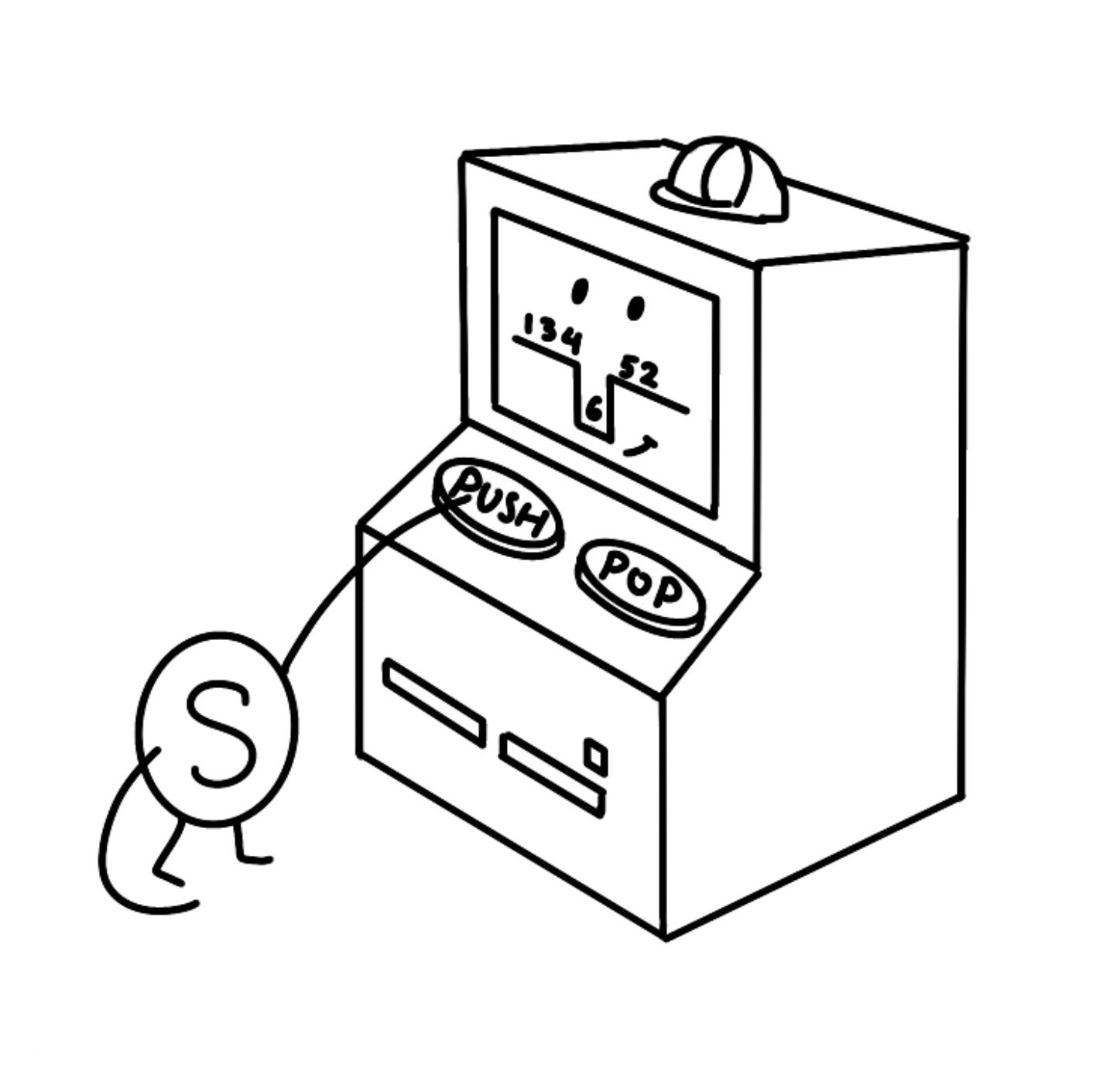}
    \vspace{-0.5cm}
    \caption{West's stack-sorting map $s$ operating Knuth's stack-sorting ``machine."}
    \label{fig:machine}
\end{figure}

In 2020, Defant and Zheng \cite{defant2021stack} introduced another variant of West's map: consecutive-pattern-avoiding stack-sorting maps $SC_\sigma$. Rather than requiring the stack to avoid a permutation $\sigma$, they instead required the stack to avoid occurrences of $\sigma$ that appear in consecutive indices. Note that a permutation avoids $21$ if and only if it avoids $21$ consecutively. Thus, West's map $s$ is equivalent to $s_{21}$ and $SC_{21}$, and serves as a ``base case" for both the pattern-avoiding and consecutive-pattern-avoiding generalizations.

In order to determine whether to ``push" or ``pop," the classical pattern-avoiding extension $s_\sigma$ requires knowledge of the whole stack. In contrast, the consecutive pattern-avoiding variant $SC_\sigma$ for $\sigma \in S_k$ only requires knowledge of the top $k-1$ entries of the stack. This is very similar to West's map, where each step only depends on the top element of the stack; $SC_\sigma$ preserves the inner-workings of West's original map. This paper focuses on the properties of the map $SC_\sigma$.

Many of the questions asked about stack-sorting maps concern how ``effectively" they sort a permutation. For example, which permutations are sorted to the identity? Which permutations can be sorted to the identity with two applications of the stack-sorting map? Given an arbitrary permutation, how many times must we apply the stack-sorting map to get the identity \cite{6, 7, 8, 13, 15, 17, 22, 37, 38}? Following this theme, Defant and Zheng studied the set $\text{Sort}_n(SC_\sigma)=SC_\sigma^{-1}(\Av(231))$; i.e., the set of permutations of length $n$ which sort to the identity after applying $SC_\sigma$ and then $s$ \cite{defant2021stack}. (Note that analogous questions were previously addressed in the classical pattern-avoiding case \cite{11}.)

Stack-sorting has also been explored from a dynamical perspective \cite{defant2021stack, 9, 14, 16, 18, 19}. For example, questions in this direction often concern the \textit{fertility} of a permutation $\sigma$, which is defined as the size of the preimage of $\sigma$ under the stack-sorting map \cite{16, defant2021stack}. Defant and Zheng were the first to apply this perspective to stack-sorting maps with pattern avoidance conditions \cite{defant2021stack}. The main goal of this paper is to prove \cite[Conjecture 8.1]{defant2021stack}, stated below. We write $\Av_n(\usigma, \urevsigma)$ for the set of length $n$ permutations which avoid $\sigma$ and $\text{rev}(\sigma)$ consecutively. See Sec~\ref{sec:preliminaries} for detailed definitions.

\begin{theorem}\label{thm:main}
    Let $\sigma \in S_k$ for $k \geq 3$. The periodic points of the map $SC_\sigma: S_n \rightarrow S_n$ are precisely the permutations in $\mathrm{Av}_n(\underline{\sigma}, \underline{\rev(\sigma)})$.
\end{theorem}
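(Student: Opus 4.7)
The plan is to prove the two inclusions separately. For the forward direction, I will show that every $\pi \in \Av_n(\underline{\sigma}, \underline{\rev(\sigma)})$ is a periodic point. The key observation is that when $\pi$ consecutively avoids $\rev(\sigma)$, no mid-stack pop occurs during the computation of $SC_\sigma(\pi)$: after pushing $\pi_1, \ldots, \pi_i$, the top-to-bottom reading of the stack is $\pi_i, \pi_{i-1}, \ldots, \pi_1$, so pushing $\pi_{i+1}$ would create a top-$k$ pattern equal to the reversal of the pattern of $\pi_{i-k+2}, \ldots, \pi_{i+1}$, which by hypothesis is not $\sigma$. Hence every element gets pushed onto the stack, and the final pop phase outputs $SC_\sigma(\pi) = \rev(\pi)$. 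Since reversal preserves consecutive avoidance of $\sigma$ and $\rev(\sigma)$, applying the same argument to $\rev(\pi)$ gives $SC_\sigma^2(\pi) = \pi$, so $\pi$ has period $1$ or $2$.

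For the backward direction, I first record the converse of the observation above: $SC_\sigma(\pi) = \rev(\pi)$ if and only if $\pi$ consecutively avoids $\rev(\sigma)$. The reverse implication holds because any mid-pop puts some $\pi_j$ with $j < n$ at the front of the output (the element $\pi_n$ is pushed last and hence can never be mid-popped), so this first entry cannot equal the first entry $\pi_n$ of $\rev(\pi)$.

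The crux of the backward direction is to produce a reversal-invariant statistic $\maxwt \colon S_n \to \mathbb{Z}_{\geq 0}$ that vanishes exactly on $\Av_n(\underline{\sigma}, \underline{\rev(\sigma)})$ and tracks the weight carried by positions lying in some consecutive $\sigma$- or $\rev(\sigma)$-window of $\pi$; a workable choice is the sum of the entries $\pi_j$ taken over all positions $j$ belonging to such a window. The main lemma to prove is that $\maxwt(SC_\sigma(\pi)) \le \maxwt(\pi)$, with strict inequality whenever $\pi$ consecutively contains $\rev(\sigma)$. When $\pi$ consecutively avoids $\rev(\sigma)$, equality is immediate: $SC_\sigma(\pi) = \rev(\pi)$ and $\maxwt$ is reversal-invariant. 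The strict decrease in the other case is the main obstacle. The intuition is that each consecutive $\rev(\sigma)$-window in $\pi$ forces a mid-pop that removes one of its entries from the stack before that entry can reappear in a bad window of the output, so a careful accounting of the mid-pop dynamics should show that the total weight of bad positions strictly decreases.

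Granting the main lemma, the backward direction follows quickly. If $\pi$ is $SC_\sigma$-periodic with period $m$, then $\maxwt(\pi) = \maxwt(SC_\sigma^m(\pi))$ precludes any strict decrease along the orbit, forcing $\pi$ to consecutively avoid $\rev(\sigma)$. Then $SC_\sigma(\pi) = \rev(\pi)$ lies in the same orbit and is itself periodic, and applying the same reasoning to $\rev(\pi)$ shows $\rev(\pi)$ consecutively avoids $\rev(\sigma)$, which is equivalent to saying $\pi$ consecutively avoids $\sigma$. Hence $\pi \in \Av_n(\underline{\sigma}, \underline{\rev(\sigma)})$, completing the proof.
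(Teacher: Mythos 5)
Your forward direction is correct and matches the paper: when $\pi\in\Av_n(\usigma,\urevsigma)$ no mid‑pop occurs, so $SC_\sigma(\pi)=\rev(\pi)$ and $SC_\sigma^2(\pi)=\pi$. The ``converse observation'' ($SC_\sigma(\pi)=\rev(\pi)$ iff $\pi$ avoids $\urevsigma$) is also fine. The problem lies in the backward direction.

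Your proposed monovariant is not merely unproven — as stated, the main lemma is false. Take $\sigma=213$, so $\rev(\sigma)=312$, and $\pi=31254$. The only bad window of $\pi$ is $3,1,2$ in positions $1,2,3$, so $\maxwt(\pi)=3+1+2=6$. Running $SC_{213}$: push $3$, push $1$; pushing $2$ would create the stack pattern $213$, so pop $1$; then push $2$, $5$, $4$ and flush. This gives $SC_{213}(\pi)=14523$, whose only bad window is $5,2,3$ (pattern $312$) in positions $3,4,5$, so $\maxwt(SC_{213}(\pi))=5+2+3=10>6$. The pre‑popped entry $1$ left the stack, but the rearranged output manufactured a new bad window carried by \emph{larger} values, and the statistic went up. Without a monotone statistic the final ``$\maxwt(\pi)=\maxwt(SC_\sigma^m(\pi))$ precludes any strict decrease'' step collapses, and the backward direction is lost.

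The paper's argument avoids a global statistic entirely and is considerably more surgical. It first uses complementation to reduce to $\sigma(2)\neq k$ (so by Lemma~\ref{lem:n} the entry $n$ is never pre‑popped), then measures indices relative to the position of $n$ with a sign $(-1)^a$ so that, for periodic points, values at each index are fixed. It then looks at the index $m$ of minimal absolute value that is ever pre‑popped; everything strictly between index $0$ and $m$ stays put forever, which pins down exactly how the entry at index $m$ can change (it is replaced by the $\sigma(1)$‑entry or the $\sigma(3)$‑entry of the offending $\urevsigma$‑window, depending on which side of $n$ it sits). A short case analysis on the relative order of $\sigma(1),\sigma(2),\sigma(3)$ shows that the value at index $m$ is weakly monotone under iteration, hence eventually constant, contradicting the assumption that it is ever pre‑popped. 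If you want to salvage a monovariant approach you would need a statistic tied to this specific index $m$ (something like the value $e^a_m$ itself) rather than a global sum over bad windows, because rearrangement of the large entries after a pop can and does increase any naively‑chosen global weight.
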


Defant and Zheng showed that this theorem holds for permutations $\sigma$ of length $3$ \cite{defant2021stack}. In Section~\ref{sec:main}, we present a proof in the general case for $\sigma$ of any length greater or equal to $3$.

Some work has been done about stacks which avoid several patterns classically \cite{3}, but the analog for consecutive-pattern-avoiding stacks has not been explored.  In particular, let $B$ be a finite set of permutations, and define the map $SC_{B}$, where the stack is required to avoid consecutive occurrences of every $\sigma \in B$. The following extension of Theorem~\ref{thm:main} remains an open question.

\begin{question}
    What are the periodic points of the map $SC_{B}$?
\end{question}

In Section~\ref{sec:counterex}, we provide a counterexample to a conjecture by Defant and Zheng about the number of iterations of $SC_{231}$ required to sort a permutation of length $n$ (stated below). We also suggest several future directions inspired by this conjecture.

\begin{conjecture}[{\cite[Conjecture 8.2]{defant2021stack}}]\label{conj:wrong}
Let $\pi \in S_n$ for $n \geq 3$. Then $SC_{231}^{2n-4}(\pi) \in \Av_n(132, 231)$. Furthermore, there exists $\tau \in S_n$ for which $SC_{231}^{2n-5}(\tau) \not\in \Av_n(132, 231)$.
\end{conjecture}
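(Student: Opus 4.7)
The plan is to refute the upper-bound half of the conjecture by exhibiting a single explicit permutation $\tau \in S_n$ whose $SC_{231}$-trajectory satisfies $SC_{231}^{2n-4}(\tau) \notin \Av_n(132, 231)$. The map $SC_{231}$ is deterministic and purely local---each step depends only on the top two stack entries and the next input symbol---so verifying any proposed trajectory is a mechanical check. The real work is locating a suitable $\tau$.

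First I would run an exhaustive search on small $n$: for each $n$ in a modest range (say $3 \le n \le 10$), compute $SC_{231}^k(\pi)$ for every $\pi \in S_n$ and record
\[
T(\pi) := \min\{k \ge 0 : SC_{231}^k(\pi) \in \Av_n(\underline{231}, \underline{132})\}.
\]
By Theorem~\ref{thm:main}, the set $\Av_n(\underline{231}, \underline{132})$ is exactly the set of periodic points of $SC_{231}$, hence forward-invariant, so $T(\pi)$ is well-defined and finite. A counterexample to Conjecture~\ref{conj:wrong} is then any pair $(n, \pi)$ with $T(\pi) > 2n - 4$. I would present the counterexample as a table listing $\tau, SC_{231}(\tau), SC_{231}^2(\tau), \ldots, SC_{231}^{2n-4}(\tau)$ and point to a consecutive occurrence of $132$ or $231$ in the final entry, making the refutation self-checking.

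Beyond the single counterexample, I would examine the structure of the slowest-sorting permutations for several values of $n$ to suggest a revised upper bound on $\max_{\pi \in S_n} T(\pi)$. Slow-sorting permutations tend to be those whose local structure keeps regenerating consecutive occurrences of $231$ in the stack from one iteration to the next; I expect the extremal families to be built by nesting or concatenating shorter slow patterns. This structural observation would motivate the future directions promised in the introduction, in particular asking for the correct asymptotic order of $\max_{\pi \in S_n} T(\pi)$ and whether it remains linear.

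The main obstacle is locating the counterexample efficiently. A naive enumeration of $S_n$ is feasible only through roughly $n = 10$, and there is no a priori guarantee that the minimal counterexample appears within that range. If the brute-force search is inconclusive, I would guide the search using families of permutations built recursively from the slowest examples found at smaller $n$, or design candidates by hand so as to preserve many consecutive $231$ patterns in the stack at each step. Once $\tau$ is identified, however, the refutation is immediate: $SC_{231}$ can be evaluated in $O(n)$ per iteration, and one only needs to display an offending consecutive $132$ or $231$ in $SC_{231}^{2n-4}(\tau)$.
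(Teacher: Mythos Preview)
Your approach matches the paper's: refute the conjecture by exhibiting an explicit permutation whose $SC_{231}$-trajectory takes more than $2n-4$ steps to reach $\Av_n(\underline{132},\underline{231})$, and display the full trajectory so the reader can verify it mechanically. The paper's counterexample is $(4,6,8,5,11,7,2,9,10,3,1) \in S_{11}$, which first lands in $\Av_{11}(\underline{132},\underline{231})$ after $19 > 2\cdot 11 - 4 = 18$ iterations. The only concrete wrinkle in your plan is the stated search range $3 \le n \le 10$: Defant and Zheng had already verified the bound for $n \le 9$, and the paper's counterexample lives in $S_{11}$, so your brute-force pass must extend at least to $n=11$ (still computationally trivial, since $11! \approx 4\times 10^7$ and each iterate of $SC_{231}$ costs $O(n)$). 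Your contingency plan for pushing past the naive range is therefore exactly what is needed, but only barely.
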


Several of Defant and Zheng's conjectures are still open. We refer the reader to \cite[Conjecture 8.3]{defant2021stack}, regarding the fertility of permutations in the consecutive-pattern-avoiding case, as well as \cite[Conjecture 5.1]{defant2021stack}, regarding the size of $\text{Sort}_n(SC_{231})$. Note that \cite[Conjecture 8.4]{defant2021stack} has been addressed in \cite{choi}.

\section{Preliminaries} \label{sec:preliminaries}

A \textit{permutation} $\pi$ is a rearrangement of the integers $1, 2, \ldots, n$, where the entry in index $i$ is denoted by $\pi(i)$. The \textit{conjugate} $\pi^c$ of a permutation $\pi$ is the permutation obtained by replacing each entry $\pi(i)$ in $\pi$ with $n-\pi(i)+1$. Denote the set of all permutations of $\{1, \ldots, n\}$ by $S_n$. Given any sequence $\tau$ of $n$ distinct integers, the standardization $\text{std}(\tau) \in S_n$ is the unique permutation such that $\text{std}(\tau)(i) < \text{std}(\tau)(j)$ if and only if $\tau(i) < \tau(j)$. In this case, we say that the elements of $\pi$ are in the same \textit{relative order} as the elements of $\tau$.

Given a sequence of distinct integers $\pi$ and a permutation $\sigma$, we say that $\pi$ \textit{contains an occurrence} of the \textit{pattern} $\sigma$ if there exists a subsequence of $\pi$ in the same relative order as $\sigma$. If $\pi$ does not contain an occurrence of $\sigma$, we say that $\pi$ \textit{avoids} $\sigma$. The set of permutations that avoid $\sigma$ is denoted by $\Av(\sigma)$.

When $\pi$ contains an occurrence of $\sigma$ which appears in consecutive indices of $\pi$, we say that $\pi$ contains a \textit{consecutive occurrence} of the pattern $\sigma$. For convenience, we distinguish consecutive occurrences of $\sigma$ with an underline. The set of all permutations which avoid avoid $\usigma$ is denoted $\Av(\usigma)$. For more background about classical and consecutive pattern occurrences, we refer the reader to \cite{6, 20, 27}.

We now introduce the stack-sorting machine. The machine is defined by two operations: ``push" and ``pop." ``Pushing" refers to removing the earliest remaining element of the input permutation and placing it on top of a ``stack"; ``popping" refers to removing the top element of the stack and placing it at the end of the output string. Notably, just like a literal stack of objects, no elements in the stack other than the one on top can be modified. West defined a deterministic map $s: S_n \rightarrow S_n$ that applies this machine to permutations. It begins by pushing the first element $\sigma(1)$ of a permutation $\sigma$ into the stack. At any given step, the machine ``pushes" the next element of the permutation onto the stack if it is smaller than the top element of the stack. Otherwise, it ``pops" the top element of the stack into the output permutation. In Figure~\ref{fig:west}, we demonstrate how the map $s$ acts on the permutation $143652$. The numbers on the right represent the \textit{input} permutation; those on the left represent the \textit{output} permutation; and those in the ``ditch" represent the \textit{stack}.
    \begin{figure}[h!]
        \centering
        \vspace{0.25cm}\includegraphics[width=450pt]{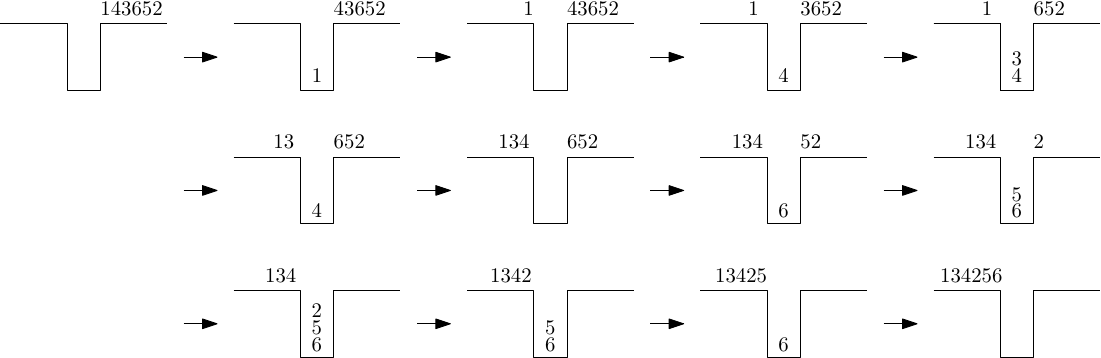}
        \vspace{0.15cm}
        \caption{Sorting 143652 with West's map $s$}
        \label{fig:west}
    \end{figure}

The pattern-avoiding stack-sorting map $s_{\sigma}: S_n \rightarrow S_n$ is defined similarly \cite{11}. (We will not study $s_{\sigma}$, but we include it here as relevant background.) At any step in the application of $s_\sigma$, the machine ``pushes" the next element of the permutation onto the stack, unless this causes an occurrence of $\sigma$ in the stack (read from top to bottom), in which case the machine ``pops." Note that the stack never contains an occurrence of $\sigma$. See Figure~\ref{fig:classical} for an example of the map $s_{231}$. To define the consecutive-pattern-avoiding stack-sorting map $SC_\sigma$, we simply replace $\sigma$ by $\usigma$ in the definition of $s_\sigma$. See Figure~\ref{fig:consecutive} for an example of the map $SC_{231}$.
    \begin{figure}
        \centering
        \vspace{0.25cm}\includegraphics[width=450pt]{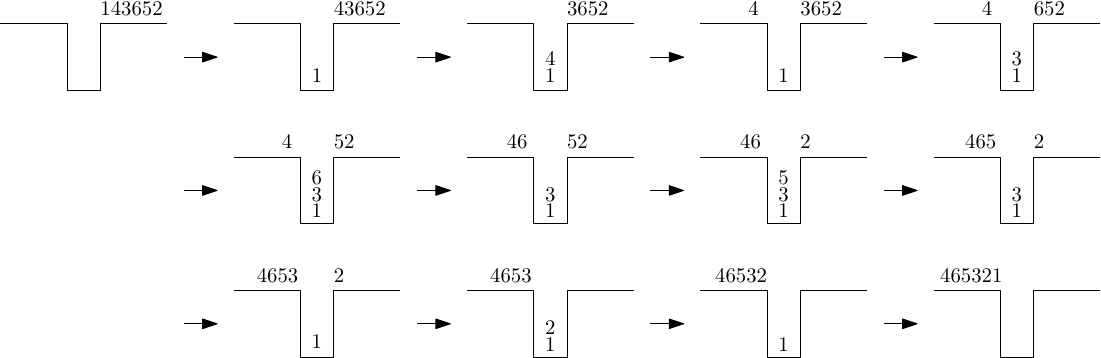}
        \vspace{0.15cm}
        \caption{Sorting 143652 with the pattern-avoiding stack-sorting map $s_{231}$.}
        \label{fig:classical}
    \end{figure}

    \begin{figure}
        \centering
        \vspace{0.25cm}\includegraphics[width=450pt]{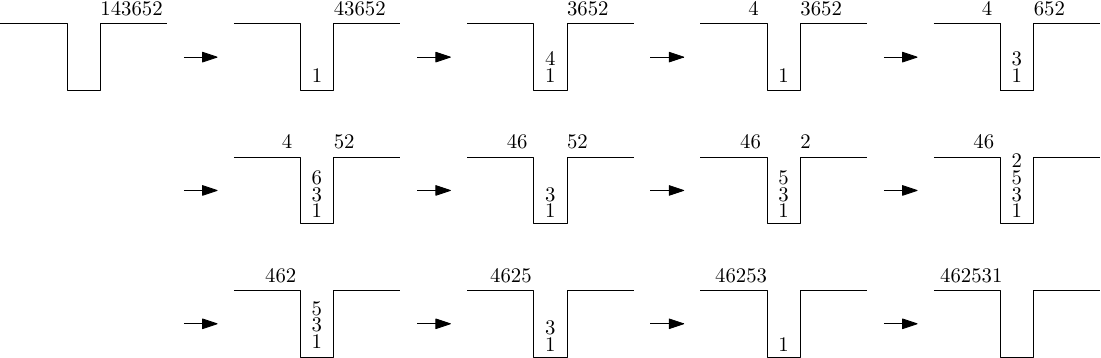}
        \vspace{0.15cm}
        \caption{Sorting 143652 with the consecutive-pattern-avoiding stack-sorting map $SC_{231}$.}
        \label{fig:consecutive}
    \end{figure}

During the application of a stack-sorting map, we say that an element is \textit{pre-popped} if it is popped before the final element of the permutation enters the stack, and \textit{post-popped} otherwise.

\section{Periodic Points of $SC_\sigma$}\label{sec:main}

This goal of this section is to prove Theorem~\ref{thm:main}. We begin by introducing the following notation. Suppose that we apply $SC_\sigma$ to a permutation $\pi$. Consider the step just before an element $e$ of $\pi$ is popped. Denote by $f(e)$ the sequence obtained by concatenating the stack (read from bottom to top) with the remaining segment of the input permutation. For example, in Figure~\ref{fig:consecutive}, $f(6)= 13652$ (see Step $5$).

\begin{lemma}\label{lem:2nd-to-last}
    Let $\sigma \in S_k$ for $k \geq 2$. Suppose that when $SC_\sigma$ is applied to $\pi$, an element $e$ is pre-popped from the stack. Then $f(e)$ contains an occurrence of $\urevsigma$ in which $e$ corresponds to $\sigma(2)$.
\end{lemma}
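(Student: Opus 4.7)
The plan is to trace back from the moment of the pop, identify the stack configuration, and read off the desired consecutive occurrence of $\urevsigma$ directly from $f(e)$. My first step is to use the definition of $SC_\sigma$: the only trigger for a pop is that pushing the next input element would create a consecutive occurrence of $\sigma$ in the stack. Since $e$ is pre-popped, the final entry of $\pi$ has not yet been pushed, so there is indeed some next input entry $x$ waiting when $e$ is popped, i.e., a would-be push is being blocked.

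Next, I will argue that the new consecutive $\sigma$-occurrence created by the hypothetical push must involve $x$. This is because the $SC_\sigma$ rule preserves the invariant that the stack avoids $\usigma$, so the only way one push can produce a new consecutive occurrence of $\sigma$ is for that occurrence to include the pushed entry, which sits at the very top. This forces the stack to hold at least $k - 1$ entries at that moment (else the post-push stack would have fewer than $k$ entries and could not contain a length-$k$ pattern at all). Writing the top $k-1$ entries of the pre-pop stack from top to bottom as $e, s_1, \ldots, s_{k-2}$, I conclude that the sequence $x, e, s_1, \ldots, s_{k-2}$ standardizes to $\sigma$.

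Finally, I translate this to $f(e)$. At the step just before $e$ is popped, the stack read bottom-to-top ends in $s_{k-2}, \ldots, s_1, e$, and the remaining input begins with $x$. Hence $f(e)$ contains the $k$ consecutive entries $s_{k-2}, \ldots, s_1, e, x$, which is the reversal of a sequence standardizing to $\sigma$. Since reversing a sequence reverses its standardization, this consecutive substring standardizes to $\urevsigma$. Moreover $e$ occupies position $k - 1$ of this block, which is exactly the position of $\sigma(2)$ in $\rev(\sigma) = \sigma(k)\sigma(k-1)\cdots\sigma(1)$, giving the required correspondence.

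I do not anticipate any substantial obstacle. The main conceptual move is the observation that a newly created consecutive $\sigma$-pattern in a $\usigma$-avoiding stack must involve the freshly pushed entry at the top; once this is in hand, the rest is essentially bookkeeping, and the only edge case to dispose of is verifying that the stack really has at least $k-1$ entries at the moment of the pop, which is immediate.
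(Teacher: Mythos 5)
Your proof is correct and takes the same approach as the paper's: observe that the pop is triggered precisely because pushing the next input element would create a consecutive occurrence of $\usigma$ in the stack, and then read off the resulting occurrence of $\urevsigma$ in $f(e)$. The paper's own proof is a one-liner that leaves the bookkeeping implicit; yours spells out the standardization and position-counting details, but the underlying argument is identical.
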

\begin{proof}
    Suppose that, at the step when $e$ is pre-popped, we instead apply the operation ``push." By definition of the map $SC_\sigma$, this would cause an occurrence of $\usigma$ in the stack. The result follows immediately. 
\end{proof}

\begin{lemma}\label{lem:n}
    Let $\pi \in S_n$, and suppose that we apply $SC_\sigma$ to $\pi$. If $\sigma(2) \neq k$, then the maximal element $n \in \pi$ is post-popped; likewise, if $\sigma(2) \neq 1$, then the minimal element $1 \in \pi$ is post-popped.
\end{lemma}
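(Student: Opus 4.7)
The plan is to derive both claims as near-immediate corollaries of Lemma~\ref{lem:2nd-to-last}, arguing by contradiction. Suppose first that $\sigma(2) \neq k$ and, for contradiction, that the maximal element $n$ is pre-popped from the stack when we apply $SC_\sigma$ to $\pi$. By Lemma~\ref{lem:2nd-to-last}, the word $f(n)$ contains a consecutive occurrence of $\rev(\sigma)$ in which $n$ plays the role of $\sigma(2)$. This occurrence involves $k$ entries of $\pi$, and among any $k$ entries of $\pi$, the largest one is either $n$ or something strictly less than $n$. Since $n$ itself is in this collection, $n$ must be the largest of the $k$ entries. But the entry playing the role of $\sigma(2)$ in the pattern is the one whose relative rank equals $\sigma(2)$, so we must have $\sigma(2) = k$, contradicting the hypothesis $\sigma(2) \neq k$.

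The argument for $1$ is symmetric. Assuming $\sigma(2) \neq 1$ and that $1$ is pre-popped, Lemma~\ref{lem:2nd-to-last} again produces a consecutive occurrence of $\rev(\sigma)$ in $f(1)$ with $1$ playing the role of $\sigma(2)$. Since $1$ is the global minimum of $\pi$, it is also the minimum among the $k$ entries involved in the occurrence, forcing $\sigma(2) = 1$, a contradiction.

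I expect no substantive obstacle here: the content of the lemma lies entirely in Lemma~\ref{lem:2nd-to-last}, and what remains is the observation that an entry playing the role of $\sigma(2)$ in a pattern occurrence has rank $\sigma(2)$ among the $k$ entries of that occurrence, so global maximality (resp.\ minimality) of $n$ (resp.\ $1$) forces $\sigma(2) = k$ (resp.\ $\sigma(2) = 1$). The only thing one might want to state carefully is the convention that ``$e$ corresponds to $\sigma(2)$'' means exactly this rank condition, which was implicitly established in the discussion preceding Lemma~\ref{lem:2nd-to-last}.
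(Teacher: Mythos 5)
Your proof is correct and takes essentially the same approach as the paper's, which simply states that Lemma~\ref{lem:2nd-to-last} forces a pre-popped $e$ to correspond to $\sigma(2)$ in an occurrence of $\underline{\rev(\sigma)}$, and leaves the rank argument implicit. You have merely spelled out the (correct) implicit step that $n$ (resp.~$1$) being a global extremum forces its rank in any occurrence to be $k$ (resp.~$1$), hence $\sigma(2)=k$ (resp.~$\sigma(2)=1$).
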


\begin{proof}
    Lemma~\ref{lem:2nd-to-last} implies that an element $e$ can only be pre-popped if it corresponds to $\sigma(2)$ in an occurrence of $\urevsigma$ in $f(e)$.
\end{proof}

We adopt the following notation for the position of points in a permutation $\pi$ relative to the position of $n$. Define \textit{the index of the entry j in $SC_\sigma^a(\pi)$} to be $$(-1)^{a}\left(SC_\sigma^a(\pi)^{-1}(j)-SC_\sigma^a(\pi)^{-1}(n)\right).$$ 

Note that in the permutation $\pi$, the entry $n$ is indexed by $0$, with positive indices to the right and negative indices to the left of $n$. When $\pi \in \Av(\usigma, \urevsigma)$, the permutation reverses each time we apply $SC_\sigma$. For the purposes of the proof of Theorem~\ref{thm:main}, we would like the values at each index to be preserved under $SC_\sigma$ when $\pi \in \Av(\usigma, \urevsigma)$. Thus, we include the constant $(-1)^{a}$. For the remainder of this section, when we discuss the indices of $SC_\sigma^a(\pi)$, we are always referring to this expression. 
When the permutation $\pi$ is clear from context, we denote \textit{the entry in index i of $SC_\sigma^a(\pi)$} by $e_i^a$.

\begin{proof}[Proof of Theorem~\ref{thm:main}.]

Fix $\sigma \in S_k$ for some $k \geq 3$. Assume first that $\pi \in \Av(\usigma, \urevsigma)$. Then it is clear that $SC_\sigma(\pi) = \rev(\pi)$ and $SC^2_\sigma(\pi)=\pi$. Thus, $\pi$ is a periodic point of $SC_\sigma$.

We now show the reverse direction; any periodic point $\pi$ of $SC_\sigma$ belongs to the set $\Av(\usigma, \urevsigma)$. Since $SC_{\sigma}(\pi) = (SC_{\sigma^c}(\pi^c))^c$, the periodic points of $SC_{\sigma^c}$ are precisely the complements of those of $SC_\sigma$. Thus, the theorem statement holds for $SC_\sigma$ if and only if it holds for $SC_{\sigma^c}$. Assume without loss of generality that $\sigma(2) \neq k$; otherwise, we may equivalently consider $\sigma^c$.

We will first show that no element of $\pi$ is pre-popped under arbitrarily many applications of $SC_\sigma$. Assume the contrary; denote by $m$ an index with minimal absolute value that is pre-popped after any number of applications of $SC_\sigma$. (Note that $m$ is not necessarily unique.)  By Lemma~\ref{lem:n}, we have that $m \neq 0$. Assume without loss of generality that $m > 0$; the proof is analogous when $m < 0$. Note that the elements in indices $0$ through $m-1$ (inclusive) are never pre-popped, so their values are constant under arbitrarily many applications of $SC_\sigma$. This means that the entry in index $m$ only changes when it is pre-popped.

There are two possible ways for the entry in index $m$ to be pre-popped. First, suppose that after $a$ applications of $SC_\sigma$, the index $m$ is to the right of the index $0$. Further suppose that applying $SC_\sigma$ again causes the entry $e_m^a$ in index $m$ to be pre-popped from the stack. By Lemma~\ref{lem:2nd-to-last}, $e_m^a$ must correspond to $\sigma(2)$ in an occurrence of $\urevsigma$ in $f(e_m^a)$. Furthermore, after the entry $e_m^a$ is pre-popped, the element corresponding to $\sigma(1)$ in $f(e_m^a)$ is pushed onto the stack and replaces it in index $m$ of the output permutation. Now suppose that after $a$ applications of $SC_\sigma$, the index $m$ is to the left of the index $0$ and applying $SC_\sigma$ again causes the entry $e_m^a$ in index $m$ to be pre-popped. As before, this means that $e_m^a$ corresponds to $\sigma(2)$ in an occurrence of $\urevsigma$ in $f(e_m^a)$. Furthermore, the top element of the stack after $e_m^a$ is pre-popped correspnds to $\sigma(3)$ in the occurrence. The elements in indices $m-1$ through $0$ inclusive are then pushed onto the stack. Thus, in the output permutation, the entry in index $m$ is the element corresponding to $\sigma(3)$ in $f(e_m^a)$.

We consider the following two cases.

\begin{itemize}
    \item \textbf{Case 1.} Suppose that the first three elements of $\sigma$ form an occurrence of $\underline{132}$ or $\underline{231}$ (resp. $\underline{213}$ or $\underline{312}$). Then in both possibilities described above, when the entry in index $m$ is pre-popped, it is replaced by a smaller (resp. larger) element. However, we assumed that $\pi$ is a periodic point, so this is impossible.
    \item \textbf{Case 2.} Otherwise, suppose that the first three elements of $\sigma$ form an occurrence of $\underline{123}$. (The proof is analogous for $\underline{321}$.) We consider two subcases.
    
    Suppose first that the entry in index $m$ of $\pi$ is greater than the entry in index $m-1$ of $\pi$; that is, $e_m > e_{m-1}$. We will argue by induction that this relation is preserved under applications of $SC_\sigma$. Assume that $e_m^{a-1}>e_{m-1}^{a-1}$ for some $a \in \mathbb{N}$; we aim to show that $e_m^a > e_{m-1}^a$. If the index $m$ is to the right of the index $0$ in $SC_\sigma^{a-1}(\pi)$, and we apply $SC_\sigma$ again, then $e_m^{a-1}$ enters the stack directly after the entry $e_{m-1}^{a-1}$. (By assumption, $e_{m-1}^{a-1}$ is not pre-popped.) Since $e_m^{a-1}>e_{m-1}^{a-1}$ but $\sigma(2)<\sigma(3)$, the element $e_m^{a-1}$ cannot correspond to $\sigma(2)$ in an occurrence of $\urevsigma$ in $f(e_{m}^{a-1})$. It follows from Lemma~\ref{lem:2nd-to-last} that the entry $e_m^{a-1}$ in index $m$ is not pre-popped. If the index $m$ is to the left of the index $0$ and applying $SC_\sigma$ again causes the entry in index $m$ to be pre-popped, then it is replaced by a larger entry. In either case, we have that $e_m^{a}>e_{m-1}^{a}$, as desired. We have also shown that if the entry in index $m$ is pre-popped, then it is replaced by a larger element. Thus, the value of the entry in index $m$ weakly increases under repeated applications of $SC_\sigma$. However, we assumed that the entry in index $m$ is pre-popped at least once, so its value is not constant. This is a contradiction as $\pi$ is periodic.

    Thus, we must have that $e_m < e_{m-1}$. Since the above argument holds for all periodic points, we must also have that $e_m^a < e_{m-1}^a$ for all $a \in \mathbb{N}_0$. Suppose that after $a$ applications of $SC_\sigma$, the index $m$ is to the left of the index $0$. Since $e_m^a < e_{m-1}^a$, the sequence $f(e_m^a)$ cannot contain an occurrence of $\urevsigma$ in which $e_{m-1}^a$ corresponds to to $\sigma(1)$ and $e_m^a$ corresponds to $\sigma(2)$. Thus, the entry $e_{m-1}^a$ enters the stack directly after $e_{m}^a$. Since $e_{m-1}^a$ is post-popped, so is $e_m^a$. If the index $m$ is to the right of the index $0$ and applying $SC_\sigma$ causes $e_m^a$ to be pre-popped, then it is replaced by a smaller entry in index $m$ of the output permutation. Thus, the value of the entry in index $m$ is weakly decreasing under repeated applications of $SC_\sigma$. Since we assumed that the entry in index $m$ is pre-popped at least once, its value is not constant. This is a contradiction as $\pi$ is periodic.
\end{itemize}

Thus, no entry in $\pi$ is pre-popped by any number of applications of $SC_\sigma$. Note that if $\pi$ contains an occurrence of $\urevsigma$, then an element must be pre-popped after one application of $SC_\sigma$; if $\pi$ contains an occurrence of $\usigma$, then $SC_\sigma(\pi) = \text{rev}(\pi)$ contains an occurrence of $\urevsigma$, so an element must be pre-popped after two applications of $SC_\sigma$. Both cases cause a contradiction, so $\pi \in \Av(\usigma, \urevsigma)$.
\end{proof} 

We conclude this section with the following consequence of Theorem~\ref{thm:main}.

\begin{corollary}
    Every periodic point of $SC_\sigma$ has period $2$.
\end{corollary}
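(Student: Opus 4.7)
The plan is to leverage Theorem~\ref{thm:main} and reduce the corollary to verifying that $SC_\sigma^2(\pi) = \pi$ for every $\pi \in \Av_n(\underline{\sigma}, \underline{\rev(\sigma)})$, which is essentially the forward direction that was asserted (but not elaborated) at the start of the proof of Theorem~\ref{thm:main}.

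First I would apply Theorem~\ref{thm:main} to deduce that any periodic point $\pi$ lies in $\Av_n(\underline{\sigma}, \underline{\rev(\sigma)})$. Then I would explicitly justify the claim that $SC_\sigma(\pi) = \rev(\pi)$ whenever $\pi \in \Av_n(\underline{\rev(\sigma)})$. The observation is that while $SC_\sigma$ processes $\pi$, the stack read from bottom to top is always a contiguous prefix of $\pi$, so read from top to bottom it is that prefix reversed. Consequently, a consecutive occurrence of $\underline{\sigma}$ among the top $k$ entries of the stack (which is what would force a pop) corresponds to a consecutive occurrence of $\underline{\rev(\sigma)}$ among the most recently pushed entries of $\pi$. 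Since $\pi$ consecutively avoids $\underline{\rev(\sigma)}$, no pop is ever triggered mid-input; every element of $\pi$ is pushed in order and then popped in reverse order at the end, giving $SC_\sigma(\pi) = \rev(\pi)$.

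To finish, I would note that $\rev(\pi)$ also lies in $\Av_n(\underline{\sigma}, \underline{\rev(\sigma)})$, because reversing a permutation sends consecutive occurrences of a pattern $\tau$ to consecutive occurrences of $\rev(\tau)$, so the pair of forbidden patterns is simply swapped. Applying the same observation to $\rev(\pi)$ yields $SC_\sigma(\rev(\pi)) = \rev(\rev(\pi)) = \pi$, hence $SC_\sigma^2(\pi) = \pi$. This is precisely the content of the corollary under the convention (matching the paper's usage) that ``period $2$'' means $SC_\sigma^2$ fixes $\pi$.

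I do not expect any serious obstacle: the only substantive ingredient, Theorem~\ref{thm:main}, is already in hand, and the remaining content is the elementary stack-dynamics observation above. The one minor subtlety worth stating carefully is the correspondence between consecutive occurrences of $\underline{\sigma}$ in the stack (read top to bottom) and consecutive occurrences of $\underline{\rev(\sigma)}$ in the input prefix — once that is made explicit, the rest is automatic.
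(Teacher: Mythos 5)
Your proposal is correct and mirrors the paper's (implicit) reasoning exactly: the corollary follows from Theorem~\ref{thm:main} together with the fact, asserted in the forward direction of that theorem's proof, that $SC_\sigma(\pi) = \rev(\pi)$ and hence $SC_\sigma^2(\pi) = \pi$ whenever $\pi \in \Av(\usigma, \urevsigma)$, and your stack-is-a-contiguous-prefix justification fills in precisely what the paper left as ``clear.'' If one wants ``period $2$'' to mean the minimal period is exactly $2$, it suffices to add the one-line observation that $\rev(\pi) \neq \pi$ for any $\pi \in S_n$ with $n \geq 2$, since $\pi(1) \neq \pi(n)$.
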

\section{Permutations That Sort to Periodic Points in Maximal Time}\label{sec:counterex}

We restate \cite[Conjecture 8.2]{defant2021stack} and present a counterexample below.

\begin{conjecture}[Defant and Zheng \cite{defant2021stack}]\label{conj:wrong}
For any permutation $\pi$ of length $n \geq 3$, we have $SC_{231}^{2n-4}(\pi) \in \Av_n(132, 231)$. Also, for every $n \geq 3$, there exists $\tau \in S_n$ for which $SC_{231}^{2n-5}(\tau) \not\in \Av_n(132, 231)$.
\end{conjecture}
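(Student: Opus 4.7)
The plan is to refute the first assertion of the conjecture by exhibiting a specific permutation $\pi$ of some small length $n$ for which $SC_{231}^{2n-4}(\pi)$ still contains a $132$ or $231$ pattern. To simplify the target, I would first translate the conjecture into a purely dynamical statement using Theorem~\ref{thm:main}: the periodic points of $SC_{231}$ are $\Av_n(\underline{132}, \underline{231})$, and an easy induction identifies $\Av_n(132, 231)$ with $\Av_n(\underline{132}, \underline{231})$. The induction runs as follows: if the maximum $n$ occupies an interior position $i$, then the consecutive triple $\pi(i-1), n, \pi(i+1)$ is forced to be a consecutive $132$ or $231$, so $n$ must sit at a boundary; removing $n$ reduces to the inductive hypothesis. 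Consequently, the conjecture is equivalent to the statement that every orbit of $SC_{231}$ reaches a periodic point within $2n-4$ iterations, a concrete finite claim amenable to verification.

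Next, I would carry out a computer search over $S_n$ for small $n$: iterate $SC_{231}$ starting from each $\pi$, record the smallest $T(\pi)$ at which the orbit lands in $\Av_n(132, 231)$, and look for $\pi$ with $T(\pi) > 2n-4$. The conjecture's original formulation suggests that the bound $2n-4$ was tested at low $n$, so I would focus the search on $n \geq 6$ and increase $n$ until a counterexample appears. Running the orbit until the output equals its own image after two further applications of $SC_{231}$ gives a clean termination condition, since by Theorem~\ref{thm:main} and its corollary every periodic point has period exactly $2$.

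Once a counterexample $\pi$ is found, I would present the full orbit $\pi, SC_{231}(\pi), \dots, SC_{231}^{2n-4}(\pi)$ explicitly and exhibit a subsequence of the final term forming a $132$ or $231$ pattern, which disproves the conjecture. The main obstacle is that this approach is essentially computational and offers no structural reason for the failure of the bound, nor an improved replacement for $2n-4$; identifying the true maximum pre-period, or a natural family of slow-to-stabilize permutations that realizes it, would be the natural follow-up and is the kind of ``future direction'' the authors suggest investigating.
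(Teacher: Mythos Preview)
Your approach is correct and essentially identical to the paper's: the authors also note that $\Av_n(132,231)=\Av_n(\underline{132},\underline{231})$ coincides with the set of periodic points, and then disprove the conjecture by an explicit computer-found counterexample. Their counterexample lives in $S_{11}$ (Defant and Zheng had already verified the bound for $n\le 9$), so your search should start at $n\ge 10$ rather than $n\ge 6$.
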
 %is it OK to state conjecture twice?

Note that $\Av_n(132, 231) = \Av_n(\underline{132}, \underline{231})$. Thus, Conjecture~\ref{conj:wrong} describes how many steps it takes to sort a permutation to a periodic point. Defant and Zheng verified that the statement holds for $n \leq 9$; the following counterexample has length $11$.

\begin{counterexample}
Consider the permutation $(4, 6, 8, 5, 11, 7, 2, 9, 10, 3, 1) \in S_{11}$. This permutation sorts to a periodic point after $19$ steps, as shown below.
\begin{align*} 
&\hspace{0.54cm}(11, 1, 3, 10, 9, 2, 7, 5, 8, 6, 4) \rightarrow (10, 7, 8, 4, 6, 5, 2, 9, 3, 1, 11) \rightarrow (6, 9, 11, 1, 3, 2, 5, 4, 8, 7, 10) \\
&\rightarrow (3, 5, 8, 10, 7, 4, 2, 1, 11, 9, 6) \rightarrow  (11, 9, 6, 1, 2, 4, 7, 10, 8, 5, 3) \rightarrow (10, 3, 5, 8, 7, 4, 2, 1, 6, 9, 11) \\
&\rightarrow (8, 11, 9, 6, 1, 2, 4, 7, 5, 3, 10)
\rightarrow (11, 7, 10, 3, 5, 4, 2, 1, 6, 9, 8)
\rightarrow (5, 9, 8, 6, 1, 2, 4, 3, 10, 7, 11) \\
&\rightarrow (9, 8, 4, 10, 11, 7, 3, 2, 1, 6, 5)
\rightarrow (6, 5, 1, 2, 3, 7, 11, 10, 4, 8, 9)
\rightarrow (11, 9, 8, 4, 10, 7, 3, 2, 1, 5, 6) \\
&\rightarrow (10, 6, 5, 1, 2, 3, 7, 4, 8, 9, 11)
\rightarrow (7, 11, 9, 8, 4, 3, 2, 1, 5, 6, 10)
\rightarrow (11, 9, 10, 6, 5, 1, 2, 3, 4, 8, 7) \\
&\rightarrow (8, 7, 4, 3, 2, 1, 5, 6, 9, 11, 10)
\rightarrow (10, 11, 9, 6, 5, 1, 2, 3, 4, 7, 8)
\rightarrow (8, 7, 4, 3, 2, 1, 5, 6, 9, 11, 10) \\
&\rightarrow (11, 10, 9, 6, 5, 1, 2, 3, 4, 7, 8)
\end{align*}
\end{counterexample}

In their conjecture, Defant and Zheng aimed to bound the number of applications of $SC_{231}$ necessary to sort a permutation to a periodic point. We restate this question as follows.

\begin{question}\label{q:n}
For any $n \in \mathbb{N}$, let $f(n)$ be the maximum number of applications of $SC_{231}$ necessary to sort a length $n$ permutation to an element of $\Av_n(132, 231)$. What can be said about $f(n)$?
\end{question}

It is not difficult to provide a bound of $O(n^2)$. We present the proof below.

\begin{theorem}
    For any $n \in \mathbb{N}$, we have $f(n) \leq (n-1)(n-2).$
\end{theorem}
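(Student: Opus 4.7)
Plan. Since $\sigma = 231$ satisfies $\sigma(2) = 3 \neq 1$, Lemma~\ref{lem:n} guarantees that the entry $1$ is post-popped in every application of $SC_{231}$. I therefore anchor the indexing convention of Section~\ref{sec:main} at the position of $1$ rather than $n$: the index of entry $j$ in $SC_{231}^a(\pi)$ is $(-1)^a(SC_{231}^a(\pi)^{-1}(j) - SC_{231}^a(\pi)^{-1}(1))$. With this convention, $1$ always occupies index $0$, and each of the $n-1$ non-zero indices carries a value in $\{2, \ldots, n\}$.

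The argument rests on two structural facts. First, $SC_{231}(\pi') = \mathrm{rev}(\pi')$ if and only if $\pi' \in \Av(\underline{132})$: pushing every entry of $\pi'$ onto the stack in order creates a consecutive $\underline{231}$ on the top of the stack exactly when the top-to-bottom reading $\mathrm{rev}(\pi')$ contains a consecutive $\underline{231}$, i.e., exactly when $\pi'$ itself contains a consecutive $\underline{132}$. Combined with Theorem~\ref{thm:main}, this implies that along any trajectory that has not yet reached $\Av_n(\underline{132}, \underline{231})$, in every two consecutive applications of $SC_{231}$ at least one must involve a pre-pop. Second, Case 1 of the proof of Theorem~\ref{thm:main} transports to our anchoring to show that whenever the entry $e_m^a$ at a non-zero index $m$ is pre-popped, it is replaced in the output by a strictly smaller value $e_m^{a+1} < e_m^a$.

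Putting these ingredients together, I plan to proceed by an outside-in induction on $|m|$, analogous to the proof of Theorem~\ref{thm:main}. Once every index of absolute value less than $|m|$ has permanently stopped being pre-popped, the value at index $\pm m$ is weakly decreasing and changes only when the entry there is pre-popped. Since values at non-zero indices lie in $\{2, \ldots, n\}$, each such index admits at most $n-2$ pre-pop events; summing over the $n-1$ non-zero indices and then invoking the factor of two from the first structural fact produces the bound $T \leq (n-1)(n-2)$ on the number of applications required to reach $\Av_n(\underline{132}, \underline{231})$.

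The main obstacle I anticipate is the careful bookkeeping required to iterate Case 1 outward. The proof of Theorem~\ref{thm:main} applies the argument only to the single globally minimal $|m|$ that is ever pre-popped along the trajectory, so one must verify that once the innermost indices stabilize the same local monotonicity applies cleanly to the next level, and that the per-level counts combine with the reversal-versus-pre-pop alternation to yield exactly the constant $(n-1)(n-2)$ rather than a naive $2(n-1)(n-2)$. Resolving this should entail pairing the analyses at indices $+m$ and $-m$ (which are exchanged under reversal via the sign flip in the indexing) and coupling each no-pre-pop reversal step to its adjacent pre-pop step in the accounting.
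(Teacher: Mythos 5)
Your approach is genuinely different from the paper's, which tracks the number $D(\pi)$ of entries strictly between $1$ and $2$, shows $D$ strictly decreases after every two applications of $SC_{231}$, and once $1$ and $2$ are adjacent inducts on $n$ by deleting $1$ (using that the map then commutes with this deletion). That gives the clean recurrence $f(n) \le 2(n-2) + f(n-1)$, which telescopes to exactly $(n-1)(n-2)$ with no factor-of-two issue.

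Your sketch, as written, has gaps that I don't see how to close along the lines you indicate. The monotonicity of the value at index $\pm m$ is established only \emph{after} every index of smaller absolute value has permanently stabilized; before that time, pre-pops at inner indices reshuffle positions relative to $1$, so the value at index $\pm m$ can change without $e_{\pm m}$ being pre-popped and can increase as well as decrease. Consequently ``each such index admits at most $n-2$ pre-pop events'' only counts pre-pops occurring after the corresponding stabilization time, and summing those tail counts over the $n-1$ non-zero indices does not bound the total number of pre-pop events, nor the total number of applications in the un-stabilized prefix. Even granting that sum, you yourself note that the alternation fact (at least one pre-pop in every two consecutive applications) gives $2(n-1)(n-2)$ rather than $(n-1)(n-2)$, and the proposed fix — pairing $+m$ with $-m$ and coupling a reversal step to an adjacent pre-pop step — is not worked out; the factor of two from the alternation is orthogonal to any pairing of indices, so it is not clear how the coupling would cancel it. Finally, the outside-in induction needs a re-verification that Case~1 of the proof of Theorem~\ref{thm:main} (which is stated for the globally minimal $|m|$ that is ever pre-popped, with inner indices \emph{never} pre-popped) still applies to a non-minimal $m$ once inner indices are only eventually stable; this is plausible but is an actual step, not a transport. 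I'd suggest looking at the $D(\pi)$ potential argument instead, which sidesteps all of these accounting issues.
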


\begin{proof}
    We adopt several ideas from the proof of \cite[Proposition~3.2]{defant2021stack}. By Lemma~\ref{lem:2nd-to-last}, when $SC_{231}$ is applied to a permutation $\pi$, the elements $1$ and $2$ of $\pi$ are post-popped. Let $D(\pi)$ be the number of entries between $1$ and $2$ in $\pi$. If $D(\pi) > 0$, there exists an occurrence of either $\underline{231}$ or $\underline{132}$ between $1$ and $2$ inclusive. Thus, applying $SC_{231}$ twice causes at least one element strictly between $1$ and $2$ to be pre-popped. Thus, $D(SC^2(\pi)) < D(\pi)$ \cite{defant2021stack}.

    Let $\pi \in S_n$. Denote by $\pi^{*}$ the result of deleting the element $1$ and normalizing the result.
    
    \begin{claim}\cite{defant2021stack}\label{claim} If $1$ and $2$ are consecutive in $\pi$, then $SC_{231}(\pi^{*})=SC_{231}(\pi)^{*}$ and $1$ and $2$ are consecutive in $SC_{231}(\pi)$.
    \end{claim}

    This is because, if $1$ and $2$ are consecutive, no occurrence of $\underline{231}$ can contain both $1$ and $2$. Thus, $1$ and $2$ enter and exit the stack consecutively. Furthermore, if either $1$ or $2$ is contained in an occurrence of $231$, then it must correspond to $1$.

    Note that any permutation of length $1$ or $2$ is a periodic point. Thus, the statement holds for $n < 3$. We proceed by induction. Suppose that $SC_{231}^{(n-2)(n-3)}(\pi) \in \Av(132, 231)$ for all $\pi \in S_{n-1}$. Consider $\pi \in S_n$; we will show that $SC_{231}^{(n-1)(n-2)}(\pi) \in \Av(132, 231)$. Since $D(SC^2(\pi)) < D(\pi)$, we have that after at most $2(n-2)$ applications of $SC_{231}$, the elements $1$ and $2$ are consecutive. Call the permutation $\tau=SC_{231}^{2(n-2)}(\pi)$. By the inductive hypothesis, $SC_{231}^{(n-2)(n-3)}(\tau^{*}) \in \Av(132, 231)$. Claim~\ref{claim} implies that $SC_{231}^{(n-2)(n-3)}(\tau^{*}) = SC_{231}^{(n-2)(n-3)}(\tau)^{*}$ and $1$ and $2$ are consecutive in $SC_{231}^{(n-2)(n-3)}(\tau)$. Thus, $SC_{231}^{(n-2)(n-3)}(\tau) \in \Av(132, 231)$, so it takes at most $2(n-2)+(n-2)(n-3) = (n-1)(n-2)$ applications of $SC_{231}$ to sort $\pi$ to a periodic point. This completes the inductive proof.
\end{proof}

We may also generalize Question~\ref{q:n} to arbitrary permutations $\sigma$.

\begin{question}
    Let $\sigma \in S_k$ for some $k \geq 3$. For any $n \in \mathbb{N}$, let $g(n, \sigma)$ be the maximum number of applications of $SC_\sigma$ necessary to sort a length $n$ permutation to an element of $\Av_n(\usigma, \urevsigma)$. What can be said about $g(n, \sigma)$?
\end{question}

\section*{Acknowledgements}
This research was conducted at the University of Minnesota Duluth REU, with support from Jane Street Capital, the
National Security Agency, and the National Science Foundation (Grants 2052036 and 2140043). First and foremost, we are grateful to Mitchell Lee for his crucial feedback throughout the research and editing processes. We would also like to thank Noah Kravitz and Colin Defant for carefully reviewing our proofs. Finally, we would like to thank Joe Gallian for extending an invitation to the Duluth REU, without which this research would not have been possible.

\newpage
\bibliography{references}
\bibliographystyle{plain}

$\\ \\ \\$
\end{document}